\begin{document}

 \newtheorem{thm}{Theorem}[section]
 \newtheorem{cor}[thm]{Corollary}
 \newtheorem{lem}[thm]{Lemma}{\rm}
 \newtheorem{prop}[thm]{Proposition}

 \newtheorem{defn}[thm]{Definition}{\rm}
 \newtheorem{assumption}[thm]{Assumption}
 \newtheorem{rem}[thm]{Remark}
 \newtheorem{ex}{Example}\numberwithin{equation}{section}

\def\la{\langle}
\def\ra{\rangle}
\def\e{{\rm e}}
\def\x{\mathbf{x}}
\def\by{\mathbf{y}}
\def\bz{\mathbf{z}}
\def\F{\mathcal{F}}
\def\R{\mathbb{R}}
\def\T{\mathbf{T}}
\def\N{\mathbb{N}}
\def\K{\mathbf{K}}
\def\bK{\overline{\mathbf{K}}}
\def\Q{\mathbf{Q}}
\def\M{\mathbf{M}}
\def\O{\mathbf{O}}
\def\C{\mathbf{C}}
\def\P{\mathbf{P}}
\def\Z{\mathbf{Z}}
\def\H{\mathcal{H}}
\def\A{\mathbf{A}}
\def\V{\mathbf{V}}
\def\AA{\overline{\mathbf{A}}}
\def\B{\mathbf{B}}
\def\c{\mathbf{c}}
\def\L{\mathbf{L}}
\def\bS{\mathbf{S}}
\def\H{\mathcal{H}}
\def\I{\mathbf{I}}
\def\Y{\mathbf{Y}}
\def\X{\mathbf{X}}
\def\G{\mathbf{G}}
\def\f{\mathbf{f}}
\def\z{\mathbf{z}}
\def\y{\mathbf{y}}
\def\d{\hat{d}}
\def\bx{\mathbf{x}}
\def\y{\mathbf{y}}
\def\v{\mathbf{v}}
\def\g{\mathbf{g}}
\def\w{\mathbf{w}}
\def\b{\mathcal{B}}
\def\a{\mathbf{a}}
\def\b{\mathbf{b}}
\def\q{\mathbf{q}}
\def\u{\mathbf{u}}
\def\s{\mathcal{S}}
\def\cc{\mathcal{C}}
\def\co{{\rm co}\,}
\def\cop{{\rm COP}\,}
\def\tg{\tilde{f}}
\def\tx{\tilde{\x}}
\def\supmu{{\rm supp}\,\mu}
\def\supnu{{\rm supp}\,\nu}
\def\m{\mathcal{M}}
\def\s{\mathcal{S}}
\def\k{\mathcal{K}}
\def\la{\langle}
\def\ra{\rangle}

\title[nonnegativity]{Best $\ell_1$-approximation of
nonnegative polynomials by sums of squares}
\author{Jean B. Lasserre}
\address{LAAS-CNRS and Institute of Mathematics\\
University of Toulouse\\
LAAS, 7 avenue du Colonel Roche\\
31077 Toulouse C\'edex 4,France}
\email{lasserre@laas.fr}
\date{}

\begin{abstract}
Given a nonnegative polynomial $f$, we provide an explicit expression for its best $\ell_1$-norm approximation
by a sum of squares of given degree.
\end{abstract}

\keywords{Polynomials; sums of squares; semidefinite programming}

\subjclass{11E25 12D15 41A50 90C22}

\maketitle

\section{Introduction}

This note is concerned with the cone of nonnegative polynomials
and its subcone of polynomials that are sums of squares (s.o.s.). Understanding
the difference between these two cones is of practical importance
because if on the one hand nonnegative polynomials are ubiquitous, on the other hand
s.o.s. polynomials are much easier to handle.
For instance, and in contrast with nonnegative polynomials, checking whether a given polynomial is s.o.s.
can be done efficiently by solving a semidefinite program, a powerful technique of convex optimization.

A negative  result by Blekherman \cite{gary} states that when the degree is fixed,
there are much more nonnegative polynomials than sums of squares and the gap
between the two corresponding cones increases with the number of variables. On the other hand,
if the degree is allowed to vary, it has been known for some time that the cone of s.o.s. polynomials is dense (for
the $\ell_1$-norm of coefficients) in the cone of polynomials nonnegative on 
the box $[-1, 1]^n$. See e.g. Berg, Christensen and Ressel \cite{berg1} and Berg \cite{berg2}. However,
\cite{berg1} was essentially an existence result and subsequently, 
Lasserre and Netzer \cite{lassnet} have 
provided a very simple and explicit sequence of s.o.s. polynomials converging for the $\ell_1$-norm
to a given nonnegative polynomial $f$.

In this note we provide an explicit expression for the {\it best} $\ell_1$-norm approximation of a given nonnegative polynomial
$f\in\R[\x]$ by a s.o.s. polynomial $g$ of given degree $2d$ ($\geq{\rm deg}\,f$). It turns out that 
\[g\,=\,f+\lambda_0^*+\sum_{i=1}^n\lambda_i^* x_i^{2d},\]
for some nonnegative vector $\lambda^*\in\R^{n+1}$,
very much like the approximation already provided in \cite{lassnet} (where the $\lambda_i^*$'s are equal). In addition,
the vector $\lambda^*$ is an optimal solution of an explicit semidefinite program, and so can be computed efficiently.
 \section{Main result}
\subsection{Notation and definitions}
Let $\R[\x]$ (resp. $\R[\x]_d$) denote the ring of real polynomials in the variables
$\x=(x_1,\ldots,x_n)$ (resp. polynomials of degree at most $d$), whereas $\Sigma[\x]$ (resp. $\Sigma[\x]_d$) denotes 
its subset of sums of squares (s.o.s.) polynomials (resp. of s.o.s. of degree at most $2d$).
For every
$\alpha\in\N^n$ the notation $\x^\alpha$ stands for the monomial $x_1^{\alpha_1}\cdots x_n^{\alpha_n}$ and for every $i\in\N$, let $\N^{p}_d:=\{\beta\in\N^n:\sum_j\beta_j\leq d\}$ whose cardinal is $s(d)={n+d\choose n}$.
A polynomial $f\in\R[\x]$ is written 
\[\x\mapsto f(\x)\,=\,\sum_{\alpha\in\N^n}\,f_\alpha\,\x^\alpha,\]
and $f$ can be identified with its vector of coefficients $\f=(f_\alpha)$ in the canonical basis 
$(\x^\alpha)$, $\alpha\in\N^n$. 
Hence, denote by $\Vert f\Vert_1$ the $\ell_1$-norm $\sum_{\alpha}\vert f_\alpha\vert$
of the coefficient vector $\f$ which also defines a norm on $\R[\x]_d$.

Let $\s^p\subset\R^{p\times p}$ denote the space of real $p\times p$ symmetric matrices.
 For any two matrices $\A,\B\in\s^p$, the notation $\A\succeq0$ (resp. $\succ0$) stands for $\A$ is positive semidefinite
 (resp. positive definite), and the notation
$\la \A,\B\ra$ stands for ${\rm trace}\,\A\B$.

Let $\v_d(\x)=(\x^\alpha)$, $\alpha\in\N^n_d$, and let
$\B_\alpha\in\R^{s(d)\times s(d)}$ be real symmetric matrices such that
\begin{equation}
\label{balpha}
\v_d(\x)\,\v_d(\x)^T\,=\,\sum_{\alpha\in\N^n_{2d}}\x^\alpha\,\B_\alpha.\end{equation}
Recall that a polynomial $g\in\R[\x]_{2d}$ is a s.o.s. if and only if there exists a real positive semidefinite matrix
$\X\in\R^{s(d)\times s(d)}$ such that
\[g_\alpha\,=\,\langle \X,\B_\alpha\ra,\qquad \forall \alpha\in\N^n_{2d}.\]

\subsection*{$d$-moment matrix} With a sequence $\y=(y_\alpha)$, $\alpha\in\N^n$,
let $L_\y:\R[\x]\to\R$ be the linear functional
\[f\quad (=\sum_{\alpha}f_{\alpha}\,\x^\alpha)\quad\mapsto\quad
L_\y(f)\,=\,\sum_{\alpha}f_{\alpha}\,y_{\alpha},\quad f\in\R[\x].\]
With $d\in\N$, the $d$-moment matrix associated with $\y$
is the real symmetric matrix $\M_d(\y)$ with rows and columns indexed 
in $\N^n_d$, and defined by:
\begin{equation}
\label{moment}
\M_d(\y)(\alpha,\beta)\,:=\,L_\y(\x^{\alpha+\beta})\,=\,y_{\alpha+\beta},\qquad\forall\alpha,\beta\in\N^n_d.\end{equation}
It is straightforward to check that
\[\left\{\,L_\y(g^2)\geq0\quad\forall g\in\R[\x]_d\,\right\}\quad\Leftrightarrow\quad\M_d(\y)\,\succeq\,0,\quad d=0,1,\ldots.\]

\subsection*{Semidefinite programming} A semidefinite program is a convex (more precisely convex conic) optimization problem 
of the form $\min_{\X}\: \{\la \C,\X\ra\::\:\mathcal{A}\,\X\,=\b;\:\X\succeq0\}$,
for some real symmetric matrices $\C,\X\in\s^p$, vector $\b\in\R^m$,
and some linear mapping $\mathcal{A}:\s^p\to\R^m$. Semidefinite programming is a powerful technique 
of convex optimization, ubiquitous in many areas. A semidefinite program can be solved
efficiently and even in time polynomial in the input size of the problem, for fixed arbitrary precision.
For more details the interested reader is referred to e.g.
\cite{boyd}.

\subsection{The result}

Consider the following optimization problem:
\begin{equation}
\label{inv-1}
\rho_d:=\displaystyle\min_g\,\{\:\Vert f-g\Vert_1\::\: g\in\Sigma[\x]_d\:\},
\end{equation}
that is, one searches for the best $\ell_1$-approximation of $f$ by a s.o.s. polynomial of degree at most $2d\,(\geq{\rm deg}\,f)$.
Of course, and even though (\ref{inv-1}) is well defined for an arbitrary $f\in\R[\x]$,
such a problem is of particular interest when $f$ is nonnegative but not a s.o.s.
\begin{thm}
\label{thmain}
Let $f\in\R[\x]$ and let $2d\geq {\rm deg}\,f$.
The best $\ell_1$-norm approximation of $f$ by a s.o.s. polynomial of degree
at most $2d$ is given by
\begin{equation}
\label{best}
\x\,\mapsto\: g(\x)\,=\,f(\x)+\,(\lambda^*_0+\sum_{i=1}^n \lambda^*_i\,x_i^{2d}),
\end{equation}
for some nonnegative vector $\lambda^*\in\R^{n+1}$. Hence $\rho_d=\displaystyle\sum_{i=0}^n\lambda^*_i$, and 
in addition, $\lambda^*$ is an optimal solution of the semidefinite program:
\begin{equation}
\label{dual3}
\displaystyle\min_{\lambda\geq0}\:\left\{\displaystyle\sum_{i=0}^n\lambda_i\::\:
f+\lambda_0+\displaystyle\sum_{i=1}^n\lambda_i\,x_i^{2d}\in\Sigma[\x]_d\:\right\}.
\end{equation}
\end{thm}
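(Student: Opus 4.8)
The plan is to read (\ref{inv-1}) as a semidefinite program, pass to its dual, show that all but $n+1$ of the dual's inequality constraints are redundant, and dualize back to recover (\ref{dual3}).

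First I would record the SDP dual of (\ref{inv-1}). Since $g\in\Sigma[\x]_d$ iff $g_\alpha=\la\X,\B_\alpha\ra$ for some $\X\succeq0$ and $\Vert f-g\Vert_1=\sum_{\alpha\in\N^n_{2d}}|f_\alpha-g_\alpha|$, problem (\ref{inv-1}) is the semidefinite program $\min\{\sum_\alpha t_\alpha:\X\succeq0,\ -t_\alpha\le f_\alpha-\la\X,\B_\alpha\ra\le t_\alpha\ \forall\alpha\}$. Writing $\rho_d=\min_{g\in\Sigma[\x]_d}\max_{\Vert\y\Vert_\infty\le1}(L_\y(f)-L_\y(g))$, exchanging min and max (legitimate since $\{\Vert\y\Vert_\infty\le1\}$ is convex and compact), and using that $\min_{g\in\Sigma[\x]_d}(-L_\y(g))$ equals $0$ if $L_\y$ is nonpositive on $\Sigma[\x]_d$ --- i.e., by the equivalence displayed after (\ref{moment}) applied to $-\y$, if $\M_d(\y)\preceq0$ --- and $-\infty$ otherwise, one gets (after replacing $\y$ by $-\y$)
\[\rho_d\ =\ \max_{\y}\ \bigl\{\,-L_\y(f)\ :\ |y_\alpha|\le1\ \ (\alpha\in\N^n_{2d}),\ \ \M_d(\y)\succeq0\,\bigr\}.\]
Both this program and the SDP above are strictly feasible --- take $\X=I$ with $t$ large; for the dual take $\y$ to be the truncated moment sequence of normalized Lebesgue measure on a small ball, so that $|y_\alpha|<1$ and $\M_d(\y)\succ0$ --- hence there is no duality gap and both values are attained.

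The heart of the matter is the claim that \emph{the feasible set above is unchanged if the constraints $|y_\alpha|\le1$ are retained only for $\alpha=0$ and for the exponents $2d\,e_i$ of the monomials $x_i^{2d}$ ($i=1,\dots,n$)}. Granting it, $\rho_d$ equals the value of $\max_{\y}\{-L_\y(f):y_0\le1,\ y_{2de_i}\le1\ (i),\ \M_d(\y)\succeq0\}$, which is exactly the semidefinite dual of (\ref{dual3}): the $n+1$ retained scalar inequalities contribute nonnegative multipliers $\lambda_0,\dots,\lambda_n$, and requiring $\la\X,\B_\alpha\ra$ to equal the $\x^\alpha$--coefficient of $f+\lambda_0+\sum_i\lambda_i x_i^{2d}$ for all $\alpha$ is precisely $f+\lambda_0+\sum_i\lambda_i x_i^{2d}\in\Sigma[\x]_d$. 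Strict feasibility again yields no gap and an optimal $(\lambda^*,\X^*)$ of (\ref{dual3}); then $\lambda^*\ge0$, $\rho_d=\sum_{i=0}^n\lambda_i^*$, and $g:=f+\lambda_0^*+\sum_i\lambda_i^*x_i^{2d}=\v_d(\x)^{T}\X^*\v_d(\x)\in\Sigma[\x]_d$ satisfies $\Vert f-g\Vert_1=\sum_{i=0}^n\lambda_i^*=\rho_d$, so $g$ is a best approximation and has the form (\ref{best}). (That (\ref{dual3}) ``is'' the advertised SDP, hence efficiently solvable, is just this rewriting of the s.o.s.\ membership as a linear matrix inequality in $(\lambda,\X)$.)

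It remains to prove the claim, which is where the real work lies. Since $\M_d(\y)\succeq0$, Cauchy--Schwarz on its entries gives $y_{\beta+\gamma}^2\le y_{2\beta}y_{2\gamma}$ for $\beta,\gamma\in\N^n_d$; writing an arbitrary $\alpha\in\N^n_{2d}$ as $\alpha=\beta+\gamma$ with $\beta,\gamma\in\N^n_d$, it suffices to show that $\M_d(\y)\succeq0$, $y_0\le1$ and $y_{2de_i}\le1$ $(i=1,\dots,n)$ force $y_{2\delta}\le1$ for every $\delta\in\N^n_d$. The diagonal entries $y_{2\delta}$ are nonnegative, and along each axis the sequence $(y_0,y_{2e_i},\dots,y_{2de_i})$ is log--convex (again from the $2\times2$ inequalities $y_{2ke_i}^2\le y_{2(k-1)e_i}y_{2(k+1)e_i}$), hence $\le1$. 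I would then argue by contradiction: suppose $M:=\max_{\delta\in\N^n_d}y_{2\delta}>1$ and put $\mathcal M=\{\delta\in\N^n_d:y_{2\delta}=M\}$. Whenever $2\delta=\mu+\nu$ with $\mu,\nu\in\N^n_d$ we have $M^2=y_{2\delta}^2\le y_{2\mu}y_{2\nu}\le M^2$, so $\mu,\nu\in\mathcal M$; thus $\mathcal M$ is closed under such splittings. If some $\delta\in\mathcal M$ had $2|\delta|\le d$, the splitting $(2\delta,0)$ would force $0\in\mathcal M$, contradicting $y_0\le1$; if some $\delta\in\mathcal M$ had a coordinate $\delta_i\ge d/2$, the splitting $(d\,e_i,\,2\delta-d\,e_i)$ would force $d\,e_i\in\mathcal M$, contradicting $y_{2de_i}\le1$. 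So every $\delta\in\mathcal M$ has $|\delta|>d/2$ and $\max_i\delta_i<d/2$. Finally, take $\delta^*\in\mathcal M$ maximizing $m:=\max_i\delta^*_i$ (so $m\ge1$, since $0\notin\mathcal M$); as $|\delta^*|>d/2$ there is enough mass to choose an exponent $\mu$ with $\mu\le2\delta^*$ (coordinatewise) and $|\mu|=d$ by filling the largest coordinates of $\delta^*$ first, which gives an admissible splitting $2\delta^*=\mu+\nu$ with $\max_i\mu_i=2m$; then $\mu\in\mathcal M$ yet $\max_i\mu_i=2m>m$, contradicting the maximality of $m$. This proves the claim, and with it the theorem. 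Everything apart from this last combinatorial step is routine SDP--duality bookkeeping and Slater--condition checking.
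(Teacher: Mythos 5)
Your proposal is correct and follows essentially the same route as the paper: formulate (\ref{inv-1}) as a semidefinite program, pass to the dual $\max\{-L_\y(f):\M_d(\y)\succeq0,\ \vert y_\alpha\vert\le1\}$, check Slater's condition on both sides, reduce the box constraints to the $n+1$ constraints $L_\y(1)\le1$ and $L_\y(x_i^{2d})\le1$, and dualize back to obtain (\ref{dual3}). The one substantive difference is that the paper simply cites \cite[Lemma 1]{arch} for the reduction step ($\M_d(\y)\succeq0$ implies $\vert y_\alpha\vert\le\max[L_\y(1),\max_iL_\y(x_i^{2d})]$), whereas you prove it from scratch via the $2\times2$ Cauchy--Schwarz inequalities $y_{\beta+\gamma}^2\le y_{2\beta}y_{2\gamma}$ and an extremal argument on the set of maximizing diagonal exponents; I checked the splittings $(2\delta,0)$, $(d\,e_i,2\delta-d\,e_i)$ and the final step forcing a coordinate $2m>m$, and the argument is sound, making your write-up self-contained where the paper is not. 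Your derivation of the dual via a minimax exchange rather than by writing the conic dual directly is a cosmetic difference.
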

\begin{proof}
Consider $f$ as an element of $\R[\x]_{2d}$ by setting $f_\alpha=0$ whenever
$\vert\alpha\vert >{\rm deg}\,f$ (where $\vert\alpha\vert=\sum\alpha_i$),
and rewrite (\ref{inv-1}) as the semidefinite program:

 \begin{equation}
\label{inv-3}
\begin{array}{rlll}
\rho_d:=\displaystyle\min_{\lambda\geq0,\X\succeq0,g}&\displaystyle\sum_{\alpha\in\N^n_{2d}}\lambda_\alpha&&\\
\mbox{s.t.}& \lambda_\alpha +g_\alpha&\geq f_\alpha,&\forall \alpha\in\N^n_{2d}\\
& \lambda_\alpha -g_\alpha&\geq -f_\alpha,&\forall \alpha\in\N^n_{2d}\\
&g_\alpha-\la\X,\B_\alpha\ra &=0,&\forall \alpha\in\N^n_{2d}.
\end{array}
\end{equation}
The dual semidefinite program of (\ref{inv-3}) reads:

\begin{equation}
\label{dual}
\left\{\begin{array}{rrll}
\displaystyle\max_{u_\alpha,v_\alpha\geq0,\y}&\displaystyle
\sum_{\alpha\in\N^n_{d}}f_\alpha(u_\alpha-v_\alpha)&&\\
\mbox{s.t.}&u_\alpha+v_\alpha&\leq \,1&\forall\alpha\in\N^n_{2d}\\
&u_\alpha-v_\alpha+y_\alpha &\,=\,0&\forall\alpha\in\N^n_{2d},\\
&\M_d(\y)&\succeq\,0,&
\end{array}\right.
\end{equation}
or, equivalently,
\begin{equation}
\label{dual1}
\left\{\begin{array}{ll}
\displaystyle\max_{\y}&-L_\y(f)\\
\mbox{s.t.}&\M_d(\y)\,\succeq\,0\\
&\vert y_\alpha\vert \,\leq\,1,\quad\forall\alpha\in\N^n_{2d}.
\end{array}\right.
\end{equation}
The semidefinite program (\ref{dual1}) has an optimal solution $\y^*$ because the feasible set is compact.
In addition,
let $\y=(y_\alpha)$ be the moment sequence of
the measure $d\mu=\e^{-\Vert\x\Vert^2}d\x$, scaled so that $\vert y_\alpha\vert <1$ for all $\alpha\in\N^n_{2d}$.
Then $(\y,\u,\v)$ with $\u=-\min[\y,0]$ and $\v=\max[\y,0]$, is strictly feasible in (\ref{dual}) 
because $\M_d(\y)\succ 0$, and so Slater's condition\footnote{Slater's condition holds the conic optimization problem $\P:\:\min_\x \{\c'\x\,:\, \A\x=\b;\:\x\in\K\}$, where $\K\subset\R^n$ is a convex cone, if
there exists a strictly feasible solution $\x_0\in{\rm int}\,\K$.
In this case, there is no duality gap
between $\P$ and its dual $\P^*:\:\max_\z\{\b'\z\,:\,\c-\A'\z\in\K^*\}$. 
In addition, if the optimal value is bounded then $\P^*$ has an optimal solution.}
holds for (\ref{dual}). 
Therefore, by a standard duality result in convex optimization, there is no duality gap
between (\ref{inv-3}) and (\ref{dual}) (or (\ref{dual1})), and (\ref{inv-3}) has an optimal solution $(\lambda^*,\X^*,g^*)$.
Hence $\rho_d=-L_{\y^*}(f)$ for any optimal solution $\y^*$ of (\ref{dual1}).

Now by \cite[Lemma 1]{arch}, $\M_d(\y)\succeq0$ implies that
$\vert y_\alpha\vert\leq\max[L_\y(1),\max_iL_\y(x_i^{2d})]$, for every $\alpha\in\N^n_{2d}$. Therefore,
(\ref{dual1}) has the equivalent formulation
\begin{equation}
\label{dual2}
\left\{\begin{array}{rrl}
\rho_d=-\displaystyle\min_{\y}&L_\y(f))&\\
\mbox{s.t.}&\M_d(\y)&\succeq\,0\\
&L_\y(1)&\leq\,1\\
&L_\y(x_i^{2d})&\leq\,1,\quad i=1,\ldots,n,
\end{array}\right.
\end{equation}
whose dual is eaxctly (\ref{dual3}).
Again Slater's condition holds for (\ref{dual2}) and it has an optimal solution $\y^*$.
Therefore (\ref{dual3}) also has an optimal solution $\lambda^*\in\R^{n+1}_+$ with
$\rho_d=\sum_i\lambda_i^*$, the desired result.
\end{proof}
So the best $\ell_1$-norm s.o.s. approximation $g$ in Theorem (\ref{thmain}) is very much the same as the 
$\ell_1$-approximation provided in Lasserre and Netzer \cite{lassnet} where
all coefficients $\lambda^*_j$ were identical.
\begin{ex}
{\rm Consider the Motzkin-like polynomial\footnote{Computation was made by running the GloptiPoly software \cite{didier}
dedicated to solving the generalized problem of moments.}  $\x\mapsto f(\x)=x_1^2x_2^2(x_1^2+x_2^2-1)+1/27$ of degree $6$, which is nonnegative but not  
a s.o.s., and with a global minimum $f^*=0$ attained at  4 global minimizers $\x^*=(\pm (1/3)^{1/2}, \pm (1/3)^{1/2})$.
The results are displayed in Table \ref{tab1} for $d=3,4,5$.

\begin{table}[ht]
\label{tab1}
\begin{center}
\begin{tabular}{|| l | l | l ||}
\hline
$d$ & $\lambda^*$  & $\rho_d$ \\
\hline
\hline
$3$ &  $\approx 10^{-3}\,(5.445,  5.367 , 5.367)$ & $\approx 1.6\, 10^{-2}$\\
$4$& $\approx 10^{-4}\,(2.4 ,  9.36 , 9.36)$ &$\approx 2.\,10^{-3}$\\
$5$& $\approx 10^{-5}\,(0.04 ,  4.34, 4.34)$ &$\approx 8.\,10^{-5}$\\
\hline 
\end{tabular}
\end{center}
\caption{Best $\ell_1$-approximation for the Motzkin polynomial.}
\end{table}
}

\end{ex}

\end{document}